\newtheorem{thm}{Theorem}[section]
\newtheorem{cor}[thm]{Corollary}
\newtheorem{defn}[thm]{Definition}
\newtheorem{prob}[thm]{Problem}
\newtheorem{prop}[thm]{Proposition}
\newtheorem*{ill}{Illustration}
\numberwithin{equation}{section}
\def\ni{\noindent}
\def\N{\mathbb{N}}
\def\M{\mathbb{M}}
\def\cS{\mathcal{S}}
\title{\textbf{\sc  A Study on Set-Graphs}}
\author{Johan Kok}
\affil{\small Tshwane Metropolitan Police Department\\ City of Tshwane, Republic of South Africa\\ E-mail: kokkiek2@tshwane.gov.za}
\author{N. K. Sudev}
\affil{\small Department of Mathematics\\ Vidya Academy of Science \& Technology \\ Thalakkottukara, Thrissur - 680501, India.\\ E-mail: sudevnk@gmail.com}
\author{K. P. Chithra}
\affil{\small Naduvath Mana, Nandikkara \\ Thrissur - 680301, India.\\ E-mail: chithrasudev@gmail.com}
\author{C. Susanth}
\affil{\small Department of Mathematics\\ Vidya Academy of Science \& Technology \\ Thalakkottukara, Thrissur - 680501, India.\\ E-mail: susanth\_c@yahoo.com}
\date{}
\begin{document}
\maketitle

\begin{abstract}
A \textit{primitive hole} of a graph $G$ is a cycle of length $3$ in $G$. The number of primitive holes in a given graph $G$ is called the primitive hole number of that graph $G$. The primitive degree of a vertex $v$ of a given graph $G$ is the number of primitive holes incident on the vertex $v$. In this paper, we introduce the notion of set-graphs and study the properties and characteristics of set-graphs.  We also check the primitive hole number and primitive degree of set-graphs. Interesting introductory results on the nature of order of set-graphs, degree of the vertices corresponding to subsets of equal cardinality, the number of largest complete subgraphs in a set-graph etc. are discussed in this study. A recursive formula to determine the primitive hole number of a set-graph is also derived in this paper.
\end{abstract}

\ni \textbf{Key Words:} set-graphs, primitive hole, primitive degree.
 
\vspace{0.2cm}

\noindent \textbf{MS Classification}: 05C07, 05C38, 05C78.

\section{Introduction}

For general notations and concepts in graph theory, we refer to \cite{BM}, \cite{FH} and \cite{DBW}. All graphs mentioned in this paper are simple, connected undirected and finite, unless mentioned otherwise.

\vspace{0.25cm}

A \textit{hole} of a simple connected graph $G$ is a chordless cycle $C_n$ , where $n \in  N$, in $G$. The \textit{girth} of a simple connected graph $G$, denoted by $g(G)$, is the order of the smallest cycle in $G$. The following notions are introduced in \cite{KS1}.

\begin{defn}{\rm
		\cite{KS1} A \textit{primitive hole} of a graph $G$ is a cycle of length $3$ in $G$. The number of primitive holes in a given graph $G$ is called the \textit{primitive hole number} of that graph $G$. The primitive hole number of a graph $G$ is denoted by $h(G)$. }
\end{defn}

\begin{defn}{\rm
		\cite{KS1} The \textit{primitive degree} of a vertex $v$ of a given graph $G$ is the number of primitive holes incident on the vertex $v$ and the primitive degree of the vertex $v$ in the graph $G$ is denoted by $d^p_G(v)$.}
\end{defn}

Some studies on primitive holes of certain graphs have been made in \cite{KS1}. The number of primitive holes in certain standard graph classes, their line graphs and total graphs were determined in this study. Some of the major results proved in \cite{KS1} are the following.  

\begin{thm}
	{\rm \cite{KS1}} The number of primitive holes in a complete graph $K_n$ is $h(K_n)=\binom{n}{3}$.
\end{thm}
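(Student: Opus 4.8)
The plan is to count the primitive holes of $K_n$ by setting up a bijection with the $3$-element subsets of the vertex set $V(K_n)$. The essential fact I would exploit is that $K_n$ is complete: any three pairwise-distinct vertices $u,v,w$ are mutually adjacent, so the three edges $uv$, $vw$, $wu$ are all present and together form a cycle $u\,v\,w\,u$ of length $3$, that is, a primitive hole. Thus every $3$-subset $\{u,v,w\}$ of $V(K_n)$ determines a primitive hole.

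Conversely, a primitive hole is by definition a $3$-cycle, and as a subgraph it is determined entirely by its three vertices, since its edge set is forced once the vertices are fixed. I would therefore argue that the assignment sending a primitive hole to its vertex set, and the assignment sending a $3$-subset to the triangle it spans, are mutually inverse, giving a bijection between the primitive holes of $K_n$ and the $3$-subsets of $V(K_n)$. The only point deserving care is to confirm there is no over-counting: a fixed unordered triangle corresponds to exactly one $3$-subset, because cyclic rotations and reflections of the vertex sequence all describe the same cycle, so the correspondence is genuinely one-to-one.

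Since $|V(K_n)| = n$, the number of $3$-subsets is $\binom{n}{3}$, and the bijection yields $h(K_n) = \binom{n}{3}$. There is no substantial obstacle here, as the result is immediate from completeness; the write-up amounts to making the bijection explicit. If an inductive argument were preferred, one could instead establish $h(K_n) = h(K_{n-1}) + \binom{n-1}{2}$ by counting the triangles through a newly introduced vertex and invoking the Pascal-type identity $\binom{n-1}{3} + \binom{n-1}{2} = \binom{n}{3}$, but the direct bijective count is cleaner and I would present that.
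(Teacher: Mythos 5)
Your bijective argument is correct and is the standard (and essentially only natural) way to prove this fact; the paper merely cites the result from \cite{KS1} without reproducing a proof, and your count of $3$-element vertex subsets, each spanning exactly one triangle by completeness, is exactly what that argument amounts to. Nothing further is needed.
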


\begin{thm}
	{\rm \cite{KS1}} For any subgraph $H$ of a graph $G$, we have $h(H)\le h(G)$. Moreover, if $G$ is a graph on $n$ vertices, then $0\le h(G)\le \binom{n}{3}$. 
\end{thm}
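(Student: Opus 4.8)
The plan is to treat each primitive hole as an object completely pinned down by the three vertices it sits on, and then to exploit the fact that these objects are preserved when passing to a supergraph. Concretely, I would prove the first assertion by showing that every primitive hole of $H$ is already a primitive hole of $G$. A primitive hole of $H$ is a cycle $uvwu$ of length $3$; its three vertices $u,v,w$ lie in $V(H)$ and its three edges $uv,vw,wu$ lie in $E(H)$. Since $H$ is a subgraph of $G$, we have $V(H)\subseteq V(G)$ and $E(H)\subseteq E(G)$, so those same three vertices and three edges are present in $G$, and hence $uvwu$ is a cycle of length $3$ in $G$ as well. This gives an injection from the set of primitive holes of $H$ into the set of primitive holes of $G$, and therefore $h(H)\le h(G)$.

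For the second assertion, the lower bound $0\le h(G)$ is immediate, since $h(G)$ is the cardinality of a (possibly empty) set of cycles, with equality exactly when $G$ is triangle-free. For the upper bound I would observe that any graph $G$ on $n$ vertices is a subgraph of the complete graph $K_n$ on the same vertex set. Applying the first part with $H=G$ and with $K_n$ playing the role of the ambient graph yields $h(G)\le h(K_n)$, and the preceding result $h(K_n)=\binom{n}{3}$ then gives $h(G)\le\binom{n}{3}$.

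I do not expect a genuinely hard step here; the whole argument is a monotonicity observation. The single point that must be handled with care is the remark that, in a simple graph, a cycle of length $3$ is determined uniquely by its vertex set, so that counting primitive holes amounts to counting certain $3$-element subsets of the vertex set. This is what makes the injection in the first part well defined, and it also delivers an independent proof of the upper bound: every primitive hole selects three distinct vertices, and there are only $\binom{n}{3}$ such triples, whence $h(G)\le\binom{n}{3}$ directly, without invoking the complete-graph count.
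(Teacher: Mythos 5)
Your argument is correct: the monotonicity of triangle-counting under subgraph inclusion, followed by embedding $G$ into $K_n$ and invoking $h(K_n)=\binom{n}{3}$, is exactly the intended route, since the paper states this result (quoted from the reference) immediately after the formula for $h(K_n)$ and offers no proof of its own to compare against. Your closing observation that a triangle in a simple graph is determined by its $3$-element vertex set, which yields the upper bound $h(G)\le\binom{n}{3}$ directly, is a nice self-contained alternative and is also sound.
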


\section{Set-Graphs}

In this paper, we introduce the notion of set-graphs and study certain characteristics of set-graphs and also present a number of interesting results related to graph properties and invariants. A set-graph is defined as follows.

\begin{defn}\label{D-SG}{\rm 
		Let $A^{(n)} = \{a_1, a_2, a_3, \ldots, a_n\}, n\in \N$ be a non-empty set and the $i$-th $s$-element subset of $A^{(n)}$ be denoted by $A_{s,i}^{(n)}$.  Now consider $\cS = \{ A_{s,i}^{(n)}: A_{s,i}^{(n)} \subseteq A^{(n)}, A_{s,i}^{(n)} \ne \emptyset\}$. The \textit{set-graph} corresponding to set $A^{(n)}$, denoted $G_{A^{(n)}}$, is defined to be the graph with $V(G_{A^{(n)}}) = \{v_{s,i}: A_{s,i}^{(n)} \in \cS\}$ and $E(G_{A^{(n)}}) = \{v_{s,i}v_{t,j}:~ A_{s,i}^{(n)}\cap A_{t,j}^{(n)}\ne \emptyset\}$, where $s\ne t~ \text{or}~ i\ne j$. }
\end{defn} 

It can be noted from the definition of set-graphs that $A^{(n)}\ne\emptyset$ and if $|A^{(n)}|$ is a singleton, then $G_{A^{(n)}}$ to be the trivial graph. Hence, all sets we consider here are non-empty, non-singleton sets.

\vspace{0.25cm}

Let us now write the vertex set of a set-graph $G_{A^{(n)}}$ as $V(G_{A^{(n)}})=\{v_{s,r}: 1\le r\le \binom{n}{s}\}$, where $s$ is the cardinality of the subset $A^{(n)}_{s,r}$ of $A^{(n)}$ corresponding to the vertex $v_{s,r}$. 

\vspace{0.25cm}

\ni The following result is perhaps obvious, but an important property of set-graphs.

\begin{prop}
	If $G$ is a set-graph, then $G$ has odd number of vertices.
\end{prop}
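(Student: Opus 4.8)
The plan is to count the vertices of $G_{A^{(n)}}$ directly and then observe that this count is forced to be odd. By Definition~\ref{D-SG}, the vertex set $V(G_{A^{(n)}})$ is in bijective correspondence with $\cS$, the collection of all \emph{non-empty} subsets of $A^{(n)}$: each vertex $v_{s,i}$ is attached to exactly one subset $A^{(n)}_{s,i}$, and distinct subsets yield distinct vertices. So the whole problem reduces to determining $|\cS|$.

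First I would recall the standard fact that an $n$-element set has exactly $2^n$ subsets. This can be seen either by encoding each subset as a length-$n$ binary membership string, or equivalently by the identity $\sum_{s=0}^{n}\binom{n}{s}=2^n$, which meshes with the paper's indexing of vertices as $v_{s,r}$ with $1\le r\le\binom{n}{s}$. Discarding the single empty set from this total leaves $|\cS| = 2^n - 1$, and hence $|V(G_{A^{(n)}})| = 2^n - 1$. Finally I would invoke the elementary parity observation: for every $n\ge 1$ the number $2^n$ is even, so $2^n-1$ is odd, which is exactly the assertion.

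There is essentially no hard step here, so rather than an obstacle the only point requiring care is confirming that $A^{(n)}_{s,i}\mapsto v_{s,i}$ really is a bijection between $\cS$ and $V(G_{A^{(n)}})$, so that no two non-empty subsets collapse to the same vertex and no vertex appears without an underlying subset. This is immediate from the definition, after which the entire argument collapses to counting the non-empty subsets and noting the parity of $2^n-1$. (The restriction to non-singleton sets $n\ge 2$ mentioned just before the statement does not affect the argument, since $2^n-1$ is odd for all $n\ge 1$.)
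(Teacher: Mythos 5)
Your proposal is correct and follows essentially the same route as the paper: identify the vertices with the $2^n-1$ non-empty subsets of $A^{(n)}$ and observe that $2^n-1$ is odd. The extra care you take in verifying the bijection and in noting that the parity argument works for all $n\ge 1$ is fine but does not change the substance of the argument.
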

\begin{proof}
	Let $G$ be a set-graph with respect to the set $A^{(n)}$. It is to be noted the number of non-empty subsets of $A^{(n)}$ is $2^n-1$. Since every vertex of $G$ corresponds to a non-empty subset of $A^{(n)}$, the number of vertices in $G$ must be $2^n-1$, an odd integer. 
\end{proof}

\begin{ill}{\rm 
		Consider the set-graph with respect to the set $A^{(3)} = \{a_1, a_2, a_3\}$. Here we have the subsets of $A^{(3)}$ which are $A^{(3)}_{1,1}=\{a_1\}, A^{(3)}_{1,2}=\{a_2\}, A^{(3)}_{1,3}=\{a_3\}, A^{(3)}_{2,1}=\{a_1, a_2\}, A^{(3)}_{2,2}=\{a_1, a_3\},A^{(3)}_{2,3} =\{a_2, a_3\}, A^{(3)}_{3,1}=\{a_1, a_2, a_3\}$. Then, the vertices of $G_{A^{(3)}}$ have the labeling as follows. $v_{1,1} =\{a_1\}, v_{1,2} =\{a_2\}, v_{1,3} =\{a_3\}, v_{2,1}=\{a_1, a_2\}, v_{2,2} = \{a_1, a_3\}, v_{2,3} =\{a_2, a_3\}, v_{3,1} = \{a_1, a_2, a_3\}$.}
\end{ill}

\ni Figure \ref{fig-1} depicts the above mentioned labeling procedure of the set-graph $G_{A^{(3)}}$. 

\begin{figure}[h!]
	\centering
	\includegraphics[width=0.5\linewidth]{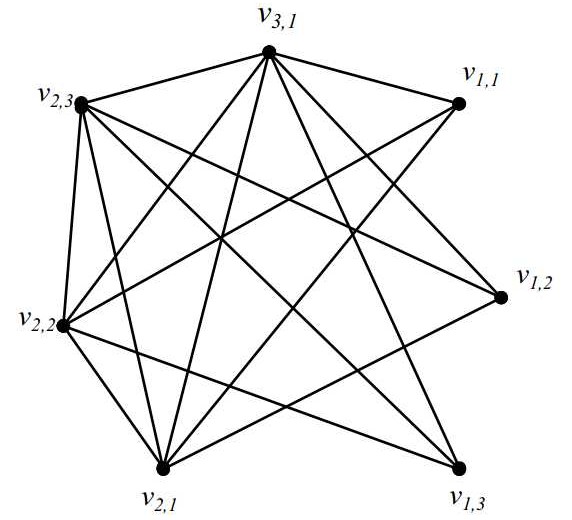}
	\caption{}
	\label{fig-1}
\end{figure}

\begin{thm}\label{T-SGDV}
	Let $G_{A^{(n)}}$ be a set-graph. Then, the vertices $v_{s,i}, v_{s,j}$ of $G_{A^{(n)}}$, corresponding to subsets $A^{(n)}_{s,i}$ and $A^{(n)}_{s,j}$ in $\cS$ of equal cardinality, have the same degree in $G_{A^{(n)}}$. That is, $d_{G_{A^{(n)}}}(v_{s,i}) = d_{G_{A^{(n)}}}(v_{s,j})$.
\end{thm}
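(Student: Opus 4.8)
The plan is to prove the slightly stronger statement that the degree of a vertex $v_{s,i}$ depends only on the cardinality $s$ of its associated subset, by computing this degree explicitly. Since the resulting count will involve only $n$ and $s$ and not the index $i$, the desired equality $d_{G_{A^{(n)}}}(v_{s,i}) = d_{G_{A^{(n)}}}(v_{s,j})$ follows at once.

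First I would fix an $s$-element subset $B = A^{(n)}_{s,i}$ and count the vertices adjacent to $v_{s,i}$. By Definition \ref{D-SG}, these are precisely the vertices $v_{t,k}$ whose associated subset $C = A^{(n)}_{t,k}$ satisfies $C \cap B \ne \emptyset$, with $C \ne B$. The cleanest route is to count by complementation: the subsets $C$ with $C \cap B = \emptyset$ are exactly the subsets of $A^{(n)} \setminus B$, and since $|A^{(n)} \setminus B| = n - s$, there are $2^{\,n-s} - 1$ such \emph{non-empty} subsets.

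Next I would assemble the total. The graph $G_{A^{(n)}}$ has $2^n - 1$ vertices in all; removing $v_{s,i}$ itself together with the $2^{\,n-s} - 1$ vertices corresponding to subsets disjoint from $B$ leaves
\[
d_{G_{A^{(n)}}}(v_{s,i}) = (2^n - 1) - 1 - (2^{\,n-s} - 1) = 2^n - 2^{\,n-s} - 1.
\]
This expression depends only on $n$ and $s$, so any two $s$-element subsets yield vertices of equal degree, which is exactly the assertion of the theorem.

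The only points requiring care — and the closest thing to an obstacle — are the bookkeeping around the empty set and around the vertex itself: the empty set is not a vertex and must be excluded, while every subset meeting $B$ is automatically non-empty (since the empty set meets nothing), so no non-existent vertices are accidentally counted. A conceptually cleaner but less quantitative alternative would be to observe that any permutation $\pi$ of the ground set $A^{(n)}$ induces a graph automorphism of $G_{A^{(n)}}$, because $\pi(C) \cap \pi(B) = \pi(C \cap B)$ preserves non-empty intersections, and that the symmetric group acts transitively on the $s$-element subsets; since automorphisms preserve degree, this gives the result with no computation at all. I would present the direct count as the main argument, since it additionally delivers the explicit degree formula $2^n - 2^{\,n-s} - 1$.
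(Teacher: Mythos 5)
Your proof is correct, but it takes a genuinely different and more direct route than the paper. The paper proceeds by induction on the cardinality $s$: it first computes the degree $2^{n-1}-1$ for vertices corresponding to singletons, then in the inductive step compares the degree of a vertex for a $(k+1)$-element subset with that of the vertex for a $k$-element subset obtained by deleting one element, arguing that the difference is a quantity independent of the particular subset chosen. Your argument instead counts non-neighbours by complementation: the non-empty subsets disjoint from an $s$-element set $B$ are exactly the $2^{\,n-s}-1$ non-empty subsets of $A^{(n)}\setminus B$, giving the closed form $d(v_{s,i}) = 2^n - 2^{\,n-s} - 1$ at once. This buys you strictly more than the paper's proof: the explicit formula immediately recovers the paper's later bounds $\delta(G)=2^{n-1}-1$ (at $s=1$) and $\Delta(G)=2^n-2$ (at $s=n$) in Theorem \ref{T-SGDV1}, and it avoids the somewhat delicate bookkeeping in the paper's inductive step. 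Your alternative observation — that permutations of the ground set induce automorphisms acting transitively on $s$-element subsets — is also a valid and even more conceptual proof of the stated equality, though it does not yield the degree formula. Both your careful exclusions (the empty set is not a vertex; the vertex itself is not its own neighbour) are handled correctly.
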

\begin{proof}
	Consider the set-graph $G=G_{A^{(n)}}, n \in \N$. We begin by considering the vertices of $G$ corresponding to the $n$ singleton subsets of $A^{(n)}$. Let these vertices be denoted by $v_{1,i}$, where $1\le i\le n$. Clearly, for all ${j\ne i}$, we have $\{a_i\} \cap \{a_k\}=\emptyset$. Hence, by the definition of set-graphs, it follows that no edges are induced amongst the vertices $v_{1,1}, v_{1,2}, v_{1,3},\ldots, v_{1,n}$.
	
	\vspace{0.2cm}
	
	Now, construct all the two element subsets of $A^{(n)}$. Now choose two arbitrary vertices $v_{2,i}$ and $v_{2,j}$, where $i\ne j$.  Then, here we have the subsets of $A^{(n)}$ of the form $\{a_i, a_j\}$, for  $1\le i\ne j \le n$. It can be observed that the subsets of the form $\{a_i, a_j\}$ and $\{a_j, a_k\}$ are the elements of $\cS$, where $1 \le i\ne j \ne k \le n$. Moreover, $\{a_i\} \cap \{a_i, a_j\}\ne \emptyset$ for all $1 \le i \ne j \le n$. In a similar way, we can extend this argument for the sets $\{a_i\}$ and an arbitrary subset of $A^{(n)}$ containing the element $a_i$. That is, the vertex $v_{1,i}$ is adjacent to those vertices of $G$ whose corresponding sets have $m$ elements including the common elements $a_i$, for $m\ge 2$. Therefore, $d_G(v_{1,i})=2^{n-1}-1$.  Since the choice of $i$ is arbitrary, we have $d_G(v_{1,i})=d_G(v_{1,j})=2^{n-1}-1$ for all $1\le i,j \le n$. Therefore, the result holds for $s=1$. 
	
	\vspace{0.2cm}
	
	Now, assume that the result holds for $s=k$, where $k$ is a positive integer. That is, we have $d_G(v_{k,i})=d_G(v_{k,j})$ for all $1\le i,j \le \binom{n}{k}$. 
	
	\vspace{0.2cm}
	
	Next, consider the vertices of $G$ corresponding to the $(k+1)$-element subsets of $A^{(n)}$. Let $A_{(k+1),i}^{(n)}$ be a $(k+1)$-element subset of $A^{(n)}$ and let $v_{(k+1),i}$ be the vertex of $G$ corresponding to the set $A_{(k+1),i}^{(n)}$. Let $a_l$ be an arbitrary element of the set $A_{(k+1),i}^{(n)}$ and let $A_{(k+1),i}^{(n)'}=A_{(k+1),i}^{(n)}-\{a_l\}$. Then, the vertex $v_{(k+1),i}$ is adjacent to the vertices of $G$ corresponding to the sets containing the element $a_l$ in addition to the vertices of $G$ corresponding to the proper subsets of $A_{(k+1),i}^{(n)}$ and $A_{(k+1),i}^{(n)'}$. Hence, the difference between the number of edges incident on $v_{(k+1),i}$ and the number of edges incident on the vertex $v'_{(k+1),i}$ corresponding to the set $A_{(k+1),i}^{(n)'}$ is equal to the number of subsets of $A^{(n)}$ containing the element $a_l$, other than $A_{(k+1),i}^{(n)}$. This number is a constant for any set of $(k+1)$-element sets. Therefore, $d_G(v_{(k+1),i})=d_G(v_{(k+1),j})$ for all $1\le i,j \le \binom{n}{k+1}$. That is, the result is true for $s=k+1$ if it is true for $s=k$. Therefore, the theorem follows by induction.
\end{proof}

A question that arouses much interest in this context is what the degree of an arbitrary vertex  of a set-graph $G_{A^{(n)}}$. The following result provides a solution to this problem.

\begin{thm}
	Let $G$ be a set-graph with respect to a non-empty set $A^{(n)}=\{a_1,a_2,a_3,\ldots,a_n\}$ and let $v_{k,i}$ be an arbitrary vertex of $G$ corresponding to an $k$-element subset of $A^{(n)}$. Then, $d_G(v_{k,i})= (\sum\limits_{J}(-1)^{|J|-1}|\bigcap\limits_{j\in J}\cS_j|)-1$, where $J$ is an indexing set such that $\emptyset\ne J\subseteq \{0,1,2,\ldots,k\}$ and $\cS_j$ is the collection of subsets of $A^{(n)}$ containing the element $a_j$.
\end{thm}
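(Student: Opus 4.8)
The plan is to reinterpret the degree as a subset-counting problem and evaluate it by inclusion-exclusion. First I would fix the $k$-element subset $A^{(n)}_{k,i}=\{a_{j_1},a_{j_2},\ldots,a_{j_k}\}$ corresponding to $v_{k,i}$ and recall from Definition~\ref{D-SG} that a vertex $v_{t,l}$ is adjacent to $v_{k,i}$ exactly when $A^{(n)}_{t,l}\cap A^{(n)}_{k,i}\neq\emptyset$. Hence the neighbours of $v_{k,i}$ correspond precisely to those non-empty subsets of $A^{(n)}$, other than $A^{(n)}_{k,i}$ itself, that contain at least one element of $A^{(n)}_{k,i}$, and $d_G(v_{k,i})$ is the number of such subsets.

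Next I would express this family as a union over the elements of $A^{(n)}_{k,i}$. A subset of $A^{(n)}$ meets $A^{(n)}_{k,i}$ if and only if it belongs to $\bigcup_{j}\cS_j$, where the union is taken over the indices $j$ of the $k$ elements of $A^{(n)}_{k,i}$ and $\cS_j$ is the collection of subsets of $A^{(n)}$ containing $a_j$. The number of subsets incident to $A^{(n)}_{k,i}$ is therefore $\bigl|\bigcup_j\cS_j\bigr|$, and the principle of inclusion-exclusion yields
\[
\Bigl|\bigcup_{j}\cS_j\Bigr|=\sum_{J}(-1)^{|J|-1}\Bigl|\bigcap_{j\in J}\cS_j\Bigr|,
\]
the sum being over all non-empty index sets $J$ drawn from the indices of $A^{(n)}_{k,i}$.

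Finally I would correct for the self-term. Since $A^{(n)}_{k,i}$ contains all of its own elements, it lies in $\bigcup_j\cS_j$ and is counted by the right-hand side above; yet $G$ is simple, so $v_{k,i}$ carries no loop and must be excluded. Removing this single over-count gives
\[
d_G(v_{k,i})=\Bigl(\sum_{J}(-1)^{|J|-1}\Bigl|\bigcap_{j\in J}\cS_j\Bigr|\Bigr)-1,
\]
which is the asserted formula.

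I expect the argument to be essentially routine once the bijection between neighbours and intersecting subsets is in place; the only delicate points are confirming that the union ranges over exactly the $k$ indices of $A^{(n)}_{k,i}$ (so that $J$ runs over those indices) and not forgetting the $-1$ that accounts for the absence of a loop at $v_{k,i}$. As a sanity check I would observe that a subset meets $A^{(n)}_{k,i}$ if and only if it is not wholly contained in the $(n-k)$-element complement $A^{(n)}\setminus A^{(n)}_{k,i}$; evaluating $\bigl|\bigcap_{j\in J}\cS_j\bigr|=2^{\,n-|J|}$ and summing the alternating series via the binomial theorem should collapse the expression to the closed form $2^{n}-2^{\,n-k}-1$, confirming consistency.
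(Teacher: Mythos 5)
Your argument is correct and follows essentially the same route as the paper: identify the neighbours of $v_{k,i}$ with the subsets meeting $A^{(n)}_{k,i}$, count them as $\bigl|\bigcup_j \cS_j\bigr|$ via inclusion--exclusion, and subtract $1$ for the vertex's own subset. Your closing sanity check giving the closed form $2^n-2^{n-k}-1$ is a worthwhile addition (it matches the bounds $2^{n-1}-1$ and $2^n-2$ of Theorem \ref{T-SGDV1} at $k=1$ and $k=n$), but the core proof is the paper's own.
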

\begin{proof}
	Let $G$ be a set-graph with respect to a non-empty set $A^{(n)}$. Without loss of generality, let $A_{k,i}^{(n)}$ be a $k$-element subset of $A^{(n)}$, say $\{a_1,a_2,a_3,\ldots,a_k\}$ and let $v_{k,i}$ be the vertex of $G$ corresponding to the set $A_{k,i}^{(n)}$. Therefore, the vertex $v_{k,i}$ is adjacent to the vertices of $G$ which correspond to the subsets of $A^{(n)}$, containing the at least one element of $A_{k,i}^{(n)}$. That is $d_G(v_{k,i})= |\bigcup\limits_{j\in J}\cS_j|-1$. But, by principle of inclusion and exclusion of sets, we have $|\bigcup\limits_{j\in J}\cS_j|=\sum\limits_{J}(-1)^{|J|-1}|\bigcap\limits_{j\in J}\cS_j|$, where $\emptyset\ne J\subseteq \{0,1,2,\ldots,k\}$. Therefore, $d_G(v_{k,i})= (\sum\limits_{J}(-1)^{|J|-1}|\bigcap\limits_{j\in J}\cS_j|)-1$, where $\emptyset\ne J\subseteq \{0,1,2,\ldots,k\}$.
\end{proof}

Determining the degree of vertices of a set-graph is an important and interesting problem at this time. The following result determines a lower and upper limits for the degree of vertices of a given set-graph.

\begin{thm}\label{T-SGDV1}
	For any vertex $v_{s,i}$ of a set-graph $G=G_{A^{(n)}}$, we have  $2^{n-1}-1\le d_G(v_{s,i})\le 2(2^{n-1}-1)$. 
\end{thm}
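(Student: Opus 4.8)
The plan is to pin down the \emph{exact} degree of an arbitrary vertex $v_{s,i}$ by a complementary counting argument, and then read off both bounds from the resulting closed form by tracking how it varies with the cardinality parameter $s$. Since the bounds are attained precisely at the extreme cardinalities ($s=1$ and $s=n$), an explicit formula for $d_G(v_{s,i})$ makes the estimate immediate.

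First I would recall from Definition \ref{D-SG} that $v_{s,i}$ and $v_{t,j}$ are adjacent exactly when $A^{(n)}_{s,i}\cap A^{(n)}_{t,j}\ne\emptyset$. Hence it is cleaner to count the \textit{non-neighbours} of $v_{s,i}$: a vertex $v_{t,j}$ fails to be adjacent to $v_{s,i}$ precisely when its corresponding subset is disjoint from $A^{(n)}_{s,i}$, that is, when $A^{(n)}_{t,j}\subseteq A^{(n)}\setminus A^{(n)}_{s,i}$. The complement $A^{(n)}\setminus A^{(n)}_{s,i}$ has $n-s$ elements, so it has exactly $2^{\,n-s}-1$ non-empty subsets, each giving one non-neighbour of $v_{s,i}$.

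Next I would assemble the count. The total number of vertices is $2^n-1$ by Proposition~2.1. Removing the $2^{\,n-s}-1$ non-neighbours and also removing the vertex $v_{s,i}$ itself (the graph is simple, so there is no loop) yields
\begin{equation*}
d_G(v_{s,i}) = (2^n-1) - (2^{\,n-s}-1) - 1 = 2^n - 2^{\,n-s} - 1.
\end{equation*}
As a sanity check this recovers the value $d_G(v_{1,i})=2^{n-1}-1$ established inside the proof of Theorem \ref{T-SGDV} when $s=1$. The one point demanding care here is bookkeeping around the empty set and the self-vertex: the empty set is not a vertex at all, so it must be excluded from the non-neighbour count, and $v_{s,i}$ itself must be subtracted separately. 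I expect this to be the only genuine place to slip.

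Finally I would observe that $2^n - 2^{\,n-s} - 1$ is strictly increasing in $s$, since the subtracted term $2^{\,n-s}$ strictly decreases as $s$ grows. Therefore the minimum over $s\in\{1,2,\ldots,n\}$ occurs at $s=1$, giving $2^n-2^{n-1}-1 = 2^{n-1}-1$, and the maximum occurs at $s=n$, giving $2^n-2^{0}-1 = 2^n-2 = 2(2^{n-1}-1)$. This sandwiches $d_G(v_{s,i})$ between these two extremes and establishes the claimed inequality $2^{n-1}-1 \le d_G(v_{s,i}) \le 2(2^{n-1}-1)$.
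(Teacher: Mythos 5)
Your proof is correct, and it takes a genuinely different and in fact tighter route than the paper. The paper argues by two cases: it computes the degree of a singleton-subset vertex ($2^{n-1}-1$) and of the vertex $v_{n,1}$ corresponding to $A^{(n)}$ itself ($2^n-2$), and simply \emph{asserts} that these are respectively the minimum and maximum over all vertices. You instead derive a closed form for the degree of an arbitrary vertex by complementary counting --- the non-neighbours of $v_{s,i}$ are exactly the $2^{n-s}-1$ non-empty subsets of the $(n-s)$-element complement, so $d_G(v_{s,i}) = 2^n - 2^{n-s} - 1$ --- and then obtain both bounds by observing that this expression is strictly increasing in $s$. Your bookkeeping (excluding the empty set from the non-neighbour count and subtracting $v_{s,i}$ itself separately) is exactly right, and the formula checks out against the $n=3$ example in Figure \ref{fig-1}. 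What your approach buys is substantial: it actually proves the monotonicity that the paper's proof takes for granted, it yields an explicit degree formula far cleaner than the inclusion--exclusion expression the paper gives elsewhere for the same quantity, and it shows the bounds are attained exactly at $s=1$ and $s=n$. The only cost is that you forgo the paper's structural description of \emph{which} subsets a given vertex is adjacent to, but that is not needed for this statement.
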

\begin{proof} Let $G=G_{A^{(n)}}$ be a set-graph with respect to a non-empty set $A^{(n)}$.  Here, we need to consider the following two cases.
	
	\vspace{0.2cm}
	
	\ni {\em Case-1:} It is to be noted that the vertices of $G$ corresponding to singleton subsets of $G$ have the minimum degree in $G$. Without loss of generality, let the vertex $v_{s,i}$ of $G$ corresponds to the set $\{a_i\}$. Then, $v_{s,i}$ should be adjacent to the vertices of $G$ corresponding to the subsets of $A^{(n)}$, other than itself, containing the element $a_i$. Therefore, degree of the vertex $v_{s,i}$ is equal to the the number of $m$-element subsets of $A^{(n)}$ containing the element $a_i$ for $m\ge2$. By binomial theorem, the total number of subsets of an $n$-element set, containing a particular element is $2^{n-1}$. Therefore, the minimum degree of a vertex in $G$ is $2^{n-1}-1$.
	
	\vspace{0.2cm}
	
	\ni {\em Case-2:} Note that we need to consider $2^n -1$ of the subsets of $A^{(n)}$ only excluding $\emptyset$. Hence, the final vertex  $v_{n,1}$ of the graph $G=G_{A^{(n)}}$ corresponding to the set $A^{(n)}$ in $\cS$ will be adjacent to all its preceding vertices. Since $G$ has $2^n-1$ vertices, $d_G(v_{n,1})=2n-2$. No other vertices in $G$  can be adjacent to all other vertices of $G$, the vertex $v_{n,1}$ has the maximum possible degree in $G$. That is, the maximum degree of a vertex in $G$ is $2^n-2=2(2^{n-1}-1)$.  
\end{proof}

\ni The following results are immediate consequences of the above theorem.

\begin{cor}
	For any set-graph $G=G_{A^{(n)}}$, $\Delta(G)=2\,\delta(G)$. 
\end{cor}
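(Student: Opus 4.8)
The plan is to read off the extremal degrees directly from Theorem \ref{T-SGDV1}, which already establishes both that every vertex satisfies $2^{n-1}-1 \le d_G(v_{s,i}) \le 2(2^{n-1}-1)$ and that both endpoints are actually attained. First I would recall that the lower bound is sharp: in Case-1 of that theorem, the vertices corresponding to singleton subsets $\{a_i\}$ are shown to have degree exactly $2^{n-1}-1$. Since no vertex can have smaller degree than this bound, it follows that $\delta(G) = 2^{n-1}-1$.

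Next I would recall that the upper bound is likewise attained: in Case-2, the vertex $v_{n,1}$ corresponding to the full set $A^{(n)}$ is adjacent to every other vertex (its intersection with any non-empty subset is non-empty), giving degree $2^n-2 = 2(2^{n-1}-1)$, and this is the maximum possible. Hence $\Delta(G) = 2(2^{n-1}-1)$.

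Combining these two identifications, I would conclude
\[
\Delta(G) = 2(2^{n-1}-1) = 2\,(2^{n-1}-1) = 2\,\delta(G),
\]
which is the desired statement. The entire argument is a short bookkeeping step: there is essentially no obstacle beyond correctly noting that the bounds in Theorem \ref{T-SGDV1} are realized as genuine minimum and maximum degrees rather than merely as inequalities. The only point deserving a sentence of care is the implicit assumption $n \ge 2$ (a non-singleton underlying set), which the paper already stipulates after Definition \ref{D-SG}; this guarantees $\delta(G) = 2^{n-1}-1 \ge 1$, so that the relation $\Delta(G) = 2\,\delta(G)$ is non-degenerate and the set-graph is genuinely connected.
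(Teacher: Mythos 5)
Your proposal is correct and follows exactly the same route as the paper: both read $\delta(G)=2^{n-1}-1$ and $\Delta(G)=2^n-2=2(2^{n-1}-1)$ directly off the two cases in the proof of Theorem \ref{T-SGDV1} and divide. Your added remarks --- that the bounds are genuinely attained and that $n\ge 2$ is implicitly assumed --- are sensible but do not change the argument.
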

\begin{proof}
	From the proof the above theorem, we have $\delta (G)= 2^{n-1}-1$ and $\Delta (G) = 2^n-2= 2(2^{n-1}-1)$. This completes the proof.
\end{proof}

\begin{cor}\label{C-UDG}
	There exists a unique vertex $v_{n,1}$ in a set-graph $G_{A^{(n)}}$ having degree $\Delta(G_{A^{(n)}})$.
\end{cor}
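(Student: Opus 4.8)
The plan is to leverage the structure already established in Theorem~\ref{T-SGDV1}, where it was shown that $\Delta(G_{A^{(n)}}) = 2^n - 2$ and that this value is attained at the vertex $v_{n,1}$ corresponding to the full set $A^{(n)}$. Existence of a vertex of maximum degree is therefore immediate; the content of the corollary lies entirely in establishing \emph{uniqueness}.

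First I would recall why $v_{n,1}$ attains the maximum. Since $A^{(n)}_{n,1} = A^{(n)}$ contains every element $a_1, \ldots, a_n$, its intersection with any other non-empty subset of $A^{(n)}$ is non-empty. By the edge rule in Definition~\ref{D-SG}, $v_{n,1}$ is thus adjacent to every one of the remaining $2^n - 2$ vertices, giving $d_{G_{A^{(n)}}}(v_{n,1}) = 2^n - 2 = \Delta(G_{A^{(n)}})$.

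For uniqueness, I would argue by contraposition: any vertex other than $v_{n,1}$ corresponds to a proper subset $A^{(n)}_{s,i}$ with $s < n$, which must omit at least one element, say $a_m$. The singleton $\{a_m\}$ is itself a vertex of $G$ (since all non-empty subsets appear as vertices), and $A^{(n)}_{s,i} \cap \{a_m\} = \emptyset$, so $v_{s,i}$ is not adjacent to the vertex corresponding to $\{a_m\}$. Hence $v_{s,i}$ is non-adjacent to at least one other vertex, which forces $d_{G_{A^{(n)}}}(v_{s,i}) \le 2^n - 3 < \Delta(G_{A^{(n)}})$. Consequently no vertex other than $v_{n,1}$ can have maximum degree, and the uniqueness claim follows.

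The argument is essentially elementary once Theorem~\ref{T-SGDV1} is in hand; the only point requiring care is the observation that, for a proper subset, the omitted element's singleton is guaranteed to exist as a vertex of the graph, which is precisely what forces the strict drop in degree. I expect no substantive obstacle beyond making this non-adjacency explicit.
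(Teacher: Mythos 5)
Your proposal is correct and follows essentially the same route as the paper, which simply defers to Case-2 of Theorem~\ref{T-SGDV1} where it is asserted that no vertex other than $v_{n,1}$ can be adjacent to all other vertices. Your write-up actually supplies the detail the paper leaves implicit --- that a proper subset omits some $a_m$ and is therefore non-adjacent to the singleton vertex $\{a_m\}$, forcing its degree down to at most $2^n-3$ --- so it is, if anything, more complete than the published argument.
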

\begin{proof}
	The proof follows from Case-2 of Theorem \ref{T-SGDV1}.
\end{proof}

The following result indicates the nature of the minimal and maximal degrees of the vertices of a set-graph.

\begin{cor}
	The maximal degree of vertex in a set-graph $G$ is always an even number and the minimal degree of a vertex in $G$ is always an odd number.
\end{cor}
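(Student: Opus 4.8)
The plan is to read off the exact values of $\delta(G)$ and $\Delta(G)$ established in the proof of Theorem~\ref{T-SGDV1} and simply examine their parities; no new combinatorial input is needed. Recall from that theorem that $\delta(G)=2^{n-1}-1$ and $\Delta(G)=2^n-2=2(2^{n-1}-1)$, where $G=G_{A^{(n)}}$.

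First I would settle the maximal degree. Writing $\Delta(G)=2(2^{n-1}-1)$ exhibits an explicit factor of $2$, so $\Delta(G)$ is an even integer regardless of $n$. This half of the claim is immediate and requires nothing beyond the factorisation already displayed in the earlier proof.

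Next I would handle the minimal degree $\delta(G)=2^{n-1}-1$. Here the one point deserving care is the range of $n$: since the sets under consideration are non-empty and non-singleton, we have $n\ge 2$, whence $n-1\ge 1$ and $2^{n-1}$ is an even integer. Subtracting $1$ from an even integer yields an odd integer, so $\delta(G)=2^{n-1}-1$ is odd.

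There is no genuine obstacle in this corollary; the only thing to guard against is forgetting the standing assumption $n\ge 2$, without which $2^{n-1}$ need not be even. I would therefore state explicitly that the convention excluding singleton (and hence $n=1$) sets guarantees $2^{n-1}$ is even, which is exactly what makes $\delta(G)$ odd, and then conclude the proof by combining the two parity observations.
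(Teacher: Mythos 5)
Your proposal is correct and follows essentially the same route as the paper: both simply read off $\delta(G)=2^{n-1}-1$ and $\Delta(G)=2(2^{n-1}-1)$ from Theorem~\ref{T-SGDV1} and observe the parities. Your explicit remark that the standing convention $n\ge 2$ is what guarantees $2^{n-1}$ is even (and hence $\delta(G)$ odd) is a small but worthwhile point of care that the paper's proof leaves implicit.
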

\begin{proof}
	Let $G$ be a set-graph with respect to a non-empty set $A^{(n)}$. Then, by Theorem \ref{T-SGDV1}, the maximum degree of a vertex in a set-graph $G$ is $\Delta (G)=2(2^{n-1}-1)$, which is always an even number and the minimal degree of a vertex $G$ is $\delta(G)=2^{n-1}-1$, which is always an odd number.
\end{proof}

We have already proved that the vertex of the set-graph $G$ corresponding to the set $A^{(n)}$ itself has the maximum degree $2^n-2$ in $G$. Analogous to this result, we propose the following result on the primitive degree of this vertex $v_{n,1}$.

\begin{prop}
	For set-graph $G=G_{A^{(n)}}$, the primitive degree of the vertex corresponding the set $A^{(n)}$ is $d^p_{G}(v_{n,1}) = |E(G)|-\Delta(G)$.
\end{prop}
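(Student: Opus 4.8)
The plan is to exploit the fact, established in Case-2 of Theorem~\ref{T-SGDV1} and in Corollary~\ref{C-UDG}, that the vertex $v_{n,1}$ corresponding to the full set $A^{(n)}$ is a universal vertex of $G$. Indeed, since $A^{(n)}\cap A_{s,j}^{(n)}\ne\emptyset$ for every non-empty subset $A_{s,j}^{(n)}$, the vertex $v_{n,1}$ is adjacent to all of the remaining $2^n-2$ vertices of $G$. This single observation does essentially all of the work, and I would state it explicitly at the outset.

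Next I would count the primitive holes incident on $v_{n,1}$ by translating them into edges among the other vertices. A primitive hole through $v_{n,1}$ is a triple $\{v_{n,1},u,w\}$ in which all three pairs are edges of $G$. Because $v_{n,1}$ is universal, the two edges $v_{n,1}u$ and $v_{n,1}w$ are automatically present for every choice of distinct vertices $u,w\ne v_{n,1}$; hence such a triple is a triangle if and only if $uw\in E(G)$. This sets up a correspondence between the primitive holes incident on $v_{n,1}$ and the edges of $G$ that are \emph{not} incident on $v_{n,1}$.

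Finally I would do the bookkeeping. The number of edges of $G$ not incident on $v_{n,1}$ equals $|E(G)|-d_G(v_{n,1})$, and by Case-2 of Theorem~\ref{T-SGDV1} we have $d_G(v_{n,1})=\Delta(G)=2^n-2$. Combining this with the correspondence of the previous step gives
\[
d^p_G(v_{n,1}) = |E(G)|-\Delta(G),
\]
which is the asserted identity.

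There is no genuinely hard step in this argument. The only point needing care is to confirm that the map sending an edge $uw$ of $G-v_{n,1}$ to the triangle $\{v_{n,1},u,w\}$ is a \textbf{bijection}: distinct edges clearly produce distinct triangles, and every primitive hole through $v_{n,1}$ arises from exactly one such edge. Both directions are immediate once the universality of $v_{n,1}$ has been recorded, so the whole proof reduces to the single counting identity above.
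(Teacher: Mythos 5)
Your proposal is correct and follows essentially the same route as the paper: both arguments use the universality of $v_{n,1}$ to identify the primitive holes through $v_{n,1}$ with the edges of $G-v_{n,1}$, and then count $|E(G)|-(2^n-2)=|E(G)|-\Delta(G)$. Your explicit remark that the correspondence is a bijection is a small tidiness improvement over the paper's one-directional phrasing, but it is not a different argument.
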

\begin{proof}
	Let $G=G_{A^{(n)}}$ be a set-graph with respect to the set $A^{(n)}$. Consider the subgraph $G'= G-v_{n,1}$. By Theorem \ref{T-SGDV}, we have $d(v_{n,1})=2^n-2$ and hence $|E(G')| = |E(G)|-(2^n - 2)$. Furthermore, both ends ends of every edge $vu \in E(G')$ are adjacent to vertex $v_{n,1}$ in $G_{A^{(n)}}$. Hence, each such edge $uv$ in $G'$ corresponds to a primitive hole $C_3$ in $G_{A^{(n)}}$ on the vertices $u,v, v_{n,1}$. Hence, $d^p_G(v_{n,1})=|E(G)|-(2^n - 2)= |E(G)|-\Delta(G)$. 
\end{proof}

In the result given below, we describe a recursive formula to determine the number of edges of a set-graph. 

\begin{thm}\label{T-SGRD}
	For a set-graph $G_{A^{(n+1)}}$ we have 
	\begin{enumerate}\itemsep0mm
		\item[(i)] $|E(G_{A^{(n+1)}})| = 3|E(G_{A^{(n)}})| + |V(G_{A^{(n)}})| + |E(K_{|V(G_{A^{(n)}})| + 1})|$
		\item[(ii)] $|V(G_{A^{(n+1)}})|  = 2|V(G_{A^{(n)}})| +1$.
	\end{enumerate}
\end{thm}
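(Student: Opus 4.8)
The plan is to fix $A^{(n+1)} = A^{(n)} \cup \{a_{n+1}\}$ and partition the non-empty subsets of $A^{(n+1)}$ according to whether or not they contain the new element $a_{n+1}$. Let $X$ be the set of vertices corresponding to non-empty subsets of $A^{(n)}$ (those avoiding $a_{n+1}$), and let $Y$ be the set of vertices corresponding to subsets containing $a_{n+1}$, that is, sets of the form $S \cup \{a_{n+1}\}$ with $S \subseteq A^{(n)}$. A direct count gives $|X| = 2^n - 1 = |V(G_{A^{(n)}})|$ and $|Y| = 2^n = |V(G_{A^{(n)}})| + 1$. Part (ii) then follows at once, since $|V(G_{A^{(n+1)}})| = |X| + |Y| = (2^n-1) + 2^n = 2|V(G_{A^{(n)}})| + 1$.

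For part (i), I would count the edges of $G_{A^{(n+1)}}$ in three groups determined by this partition. First, two vertices inside $X$, corresponding to non-empty subsets $S_1, S_2 \subseteq A^{(n)}$, are adjacent exactly when $S_1 \cap S_2 \neq \emptyset$; this is precisely the adjacency relation defining $G_{A^{(n)}}$, so the subgraph induced on $X$ is isomorphic to $G_{A^{(n)}}$ and contributes $|E(G_{A^{(n)}})|$ edges. Second, any two vertices inside $Y$ correspond to sets that both contain $a_{n+1}$, so their intersection is automatically non-empty; hence $Y$ induces a complete graph on $|Y| = |V(G_{A^{(n)}})| + 1$ vertices, contributing $|E(K_{|V(G_{A^{(n)}})|+1})|$ edges.

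The crux is the count of edges running between $X$ and $Y$. A vertex of $X$ labelled by a non-empty $S_1 \subseteq A^{(n)}$ is adjacent to a vertex of $Y$ labelled by $S_2 \cup \{a_{n+1}\}$ precisely when $S_1 \cap (S_2 \cup \{a_{n+1}\}) \neq \emptyset$; since $S_1$ omits $a_{n+1}$, this reduces to $S_1 \cap S_2 \neq \emptyset$, which in particular forces $S_2 \neq \emptyset$. Because the two endpoints lie in disjoint vertex classes, each such crossing edge is determined by an ordered pair $(S_1, S_2)$ of non-empty subsets of $A^{(n)}$ with $S_1 \cap S_2 \neq \emptyset$. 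I would then split this count according to whether $S_1 = S_2$: the diagonal pairs always intersect and number $|V(G_{A^{(n)}})|$, while the off-diagonal ordered pairs correspond two-to-one to the edges of $G_{A^{(n)}}$ and so number $2|E(G_{A^{(n)}})|$. This yields $2|E(G_{A^{(n)}})| + |V(G_{A^{(n)}})|$ crossing edges.

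Summing the three contributions gives
\[
|E(G_{A^{(n+1)}})| = |E(G_{A^{(n)}})| + |E(K_{|V(G_{A^{(n)}})|+1})| + \left(2|E(G_{A^{(n)}})| + |V(G_{A^{(n)}})|\right),
\]
which is exactly $3|E(G_{A^{(n)}})| + |V(G_{A^{(n)}})| + |E(K_{|V(G_{A^{(n)}})|+1})|$, as claimed. The only delicate point, and the step I expect to demand the most care in a full write-up, is the crossing-edge count: one must check that the vertex $\{a_{n+1}\}$ of $Y$ (the case $S_2 = \emptyset$) contributes no crossing edge, and that the passage from unordered edges of $G_{A^{(n)}}$ to ordered intersecting pairs correctly produces the factor of two together with the diagonal term $|V(G_{A^{(n)}})|$.
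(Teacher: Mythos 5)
Your proof is correct and follows essentially the same decomposition as the paper's: the induced copy of $G_{A^{(n)}}$ on the old subsets, the complete graph $K_{|V(G_{A^{(n)}})|+1}$ on the subsets containing $a_{n+1}$, the $|V(G_{A^{(n)}})|$ parallel links $S \leftrightarrow S\cup\{a_{n+1}\}$, and the $2|E(G_{A^{(n)}})|$ remaining cross edges. Your static partition-and-count phrasing is in fact a little tighter than the paper's step-by-step construction, particularly in the careful two-to-one accounting of the off-diagonal cross edges.
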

\begin{proof}
	Consider the set-graph $G_{A^{(n)}}$. To extend it to $G_{A^{(n+1)}}$, we proceed in five steps as explained below.
	\begin{enumerate}\itemsep0.25cm
		\item[(i)] Replicate the vertices of $G_{A^{(n)}}$ as an \textit{edgeless} graph and add the new element $a_{n+1}$ as an element to all subsets corresponding to all $v_{s,i} \in V(G_{A^{(n)}})$ and label each \textit{replica vertex}, $v^{\ast}_{s,i}$. Also add the new vertex $v_{1,(n+1)}$ corresponding to the single element subset $\{a_{n+1}\}$.,
		
		\item[(ii)] Apply the definition of a set-graph to these new vertices. Clearly, we obtain the complete graph $K_{|V(G_{A^{(n)}})| + 1}$.
		
		\item[(iii)] Each vertex $v_{s,i} \in V(G_{A^{(n)}})$ corresponding to the subset $A_{s,i}^{(n)}$ can be linked with its replica vertex corresponding to the new subset $A_{s,i}^{(n)} \cup \{a_{n+1}\}$. We refer to \textit{parallel linkage} and exactly $|V(G_{A^{(n)}})|$ such edges are added.
		
		\item[(iv)] For the ends of each edge in $E(G_{A^{(n)}})$ say $v_{s,l}$ and $v_{t,l'}$ with corresponding subsets say, $A_{s,k}^{(n)}$ and $A_{t,m}^{(n)}$ we have $A_{s,k}^{(n)} \cap (A_{t,m}^{(n)}\cup \{a_{n+1}\}) \ne \emptyset$ and $A_{t,m}^{(n)}\cap (A_{s,k}^{(n)} \cup \{a_{n+1}\}) \ne \emptyset$. So the edges $v_{s,l} v_{s,l}^{\ast}$ and $v_{t,l'}v_{t,l'}^{\ast}$, with $v_{s,l}^{\ast}, v_{t,l'}^{\ast}$ corresponding to subsets $A_{s,k}^{(n)} \cup \{a_{n+1}\}$ and $A_{t,m}^{(n)} \cup \{a_{n+1}\}$ respectively, exist. Hence $2|E(G_{A^{(n)}})|$ additional edges are linked.
		
		\item[(v)] Relabel the vertices according to the Definition \ref{D-SG} to obtain the set-graph $G_{A^{(n+1)}}$. 
	\end{enumerate} 
	
	The summation of the edges added through steps (i) to (v) plus the existing edges of $G_{A^{(n)}}$ provides the result:  $|E(G_{A^{(n+1)}})| = 3|E(G_{A^{(n)}})| + |V(G_{A^{(n)}})| + |E(K_{|V(G_{A^{(n)}})| + 1})|$.  
	
	\vspace{0.2cm}
	
	The second part of the Theorem is an immediate consequence of the above proof of first part. Then, the proof is complete.
\end{proof}

\ni The following is a result related to the largest complete graphs found in a set-graph.

\begin{prop}\label{P-SGKN}
	The set-graph $G_{A^{(n)}}, n \ge 2$ has exactly two largest complete graphs, $K_{2^{n-1}}$. 
\end{prop}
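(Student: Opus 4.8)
The plan is to recast the complete subgraphs of $G_{A^{(n)}}$ as \emph{intersecting families} of subsets of $A^{(n)}$. By Definition \ref{D-SG}, two vertices are adjacent exactly when their corresponding subsets meet, so a collection of vertices induces a complete graph if and only if the corresponding subsets are pairwise intersecting. Hence a largest complete subgraph of $G_{A^{(n)}}$ corresponds to a largest pairwise-intersecting family of non-empty subsets of $A^{(n)}$, and the proposition splits into two parts: that the maximum size of such a family is $2^{n-1}$, and that exactly two families attain it.

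First I would fix the size. For the upper bound I would pair each subset $S\subseteq A^{(n)}$ with its complement $A^{(n)}\setminus S$; since $S$ and $A^{(n)}\setminus S$ are disjoint, at most one member of each pair can lie in an intersecting family. The $2^n$ subsets fall into $2^{n-1}$ complementary pairs, so every intersecting family has at most $2^{n-1}$ members, i.e.\ the clique number is at most $2^{n-1}$. For the matching lower bound I would take, for a fixed element $a\in A^{(n)}$, the family of all subsets containing $a$: there are exactly $2^{n-1}$ of them, all are non-empty, and they pairwise intersect in $a$. This shows the largest complete subgraph is indeed $K_{2^{n-1}}$.

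The decisive and, I expect, hardest step is the count. A first reduction is that a family of the maximum size $2^{n-1}$ must contain exactly one set from each complementary pair; from the pair $\{\emptyset,A^{(n)}\}$ it is therefore forced to take $A^{(n)}$, so by Corollary \ref{C-UDG} the unique maximum-degree vertex $v_{n,1}$ lies in every largest complete graph. The obvious maximum families are the \emph{stars} $\{S : a\in S\}$, one for each element $a$, together with possible non-star families built by choosing one set from each remaining pair while preserving pairwise intersection. I would try to organise these choices through the recursive construction of Theorem \ref{T-SGRD}, where the freshly added vertices all share the element $a_{n+1}$ and already form a copy of the top clique, and then track how a maximum family of $G_{A^{(n)}}$ extends across the parallel linkages. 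The genuine obstacle is to reconcile the claimed total of two with these several natural candidates: pinning down exactly which selections remain pairwise intersecting, and ruling out all but two of them, is precisely where the combinatorics becomes delicate and will require a careful case analysis or an inductive characterisation of the extremal families.
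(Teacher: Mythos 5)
Your reformulation of the largest complete subgraphs of $G_{A^{(n)}}$ as maximum pairwise-intersecting families of non-empty subsets of $A^{(n)}$ is correct, and your complementation argument cleanly establishes that these subgraphs have order exactly $2^{n-1}$; on this half your argument is actually more complete than the paper's, which only exhibits cliques of that order via its recursive construction and never proves the upper bound. The genuine gap is the step you flag yourself and leave open: the enumeration. That step cannot be completed, because the claim ``exactly two'' is false for every $n\ge 3$, and your own framework shows why. For each $i$ the star $\{S\subseteq A^{(n)}: a_i\in S\}$ is an intersecting family of the maximum size $2^{n-1}$, and these $n$ stars are pairwise distinct, so there are already at least $n$ largest complete subgraphs. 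For $n=3$ the family $\{\{a_1,a_2\},\{a_1,a_3\},\{a_2,a_3\},\{a_1,a_2,a_3\}\}$ is a fourth maximum clique $K_4$ in $G_{A^{(3)}}$, visibly present in Figure \ref{fig-1}. In general, since every maximal intersecting family of subsets of an $n$-set has size exactly $2^{n-1}$ (if $|F|<2^{n-1}$ there is a complementary pair $\{S,S^c\}$ disjoint from $F$, and if neither set could be added one would find disjoint members $T_1\subseteq S^c$ and $T_2\subseteq S$ of $F$), the number of largest complete subgraphs equals the number of maximal intersecting families on $n$ points, which grows rapidly with $n$ ($1,2,4,12,81,\dots$).

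For comparison, the paper's proof exhibits exactly the two cliques you would expect from the recursive construction of Theorem \ref{T-SGRD} --- the star at $a_n$ (the replica vertices together with $v_{1,n}$) and the family $\{B\cup\{a_n\}:\emptyset\ne B\subseteq A^{(n-1)}\}\cup\{A^{(n-1)}\}$ --- and then asserts without any argument that ``another largest complete graph does not exist.'' Since any permutation of the elements of $A^{(n)}$ induces an automorphism of $G_{A^{(n)}}$, the star at $a_1$ is a third maximum clique whenever $n\ge 3$, so that assertion is wrong. Your instinct to treat the count as the delicate point was sound; the correct outcome of a careful case analysis there is a counterexample to the stated proposition (for $n\ge 3$), not a proof of it. The statement is accurate only for $n=2$, or if weakened to ``at least two.''
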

\begin{proof}
	Consider the set-graph $G_{A^{(n-1)}}$ and extend to $G_{A^{(n)}}$. From step (ii) in the proof of Theorem \ref{T-SGRD}, we construct a largest complete graph amongst the replica vertices and the new vertex $v_{1,n}$ because no vertex of $G_{A^{(n-1)}}$ is linked to $v_n$. However, the erstwhile vertex $v_{(n-1),1}$ of $G_{A^{(n-1)}}$ is also linked to all the \textit{replica} vertices hence, inducing the complete graph $K_{2^{n-1}}$. Clearly, another largest complete graph does not exist.
\end{proof}

The primitive hole number of a set-graph is determined recursively in the following theorem.

\begin{thm}
	For a set-graph $G_{A^{(n)}}, n \geq 3$ we find the number of primitive holes through the recursive formula $h(G_{A^{(n+1)}}) = h(G_{A^{(n)}}) + \binom{2^n}{3} + 4|E(G_{A^{(n)}})|$.
\end{thm}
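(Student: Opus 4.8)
The plan is to run the very same five–step construction used in the proof of Theorem~\ref{T-SGRD}, but now bookkeeping primitive holes (triangles) instead of edges. Write $V(G_{A^{(n+1)}})=O\cup R$, where $O=V(G_{A^{(n)}})$ consists of the original vertices (whose subsets do not contain $a_{n+1}$) and $R$ consists of the $2^n-1$ replica vertices $v^{\ast}_{s,i}$ together with the new singleton vertex $v_{1,(n+1)}$, so that by step~(ii) the set $R$ induces $K_{2^n}$. The first thing to record is the adjacency rule across the two parts: since $a_{n+1}\notin A^{(n)}_{s,i}$ for an original vertex, one has $v_{s,i}\sim v^{\ast}_{u,l}$ in $G_{A^{(n+1)}}$ if and only if $A^{(n)}_{s,i}\cap A^{(n)}_{u,l}\ne\emptyset$, while no original vertex is adjacent to $v_{1,(n+1)}$. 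Hence the replica-neighbours of $v_{s,i}$ are exactly the replicas of the vertices in its closed neighbourhood in $G_{A^{(n)}}$, and $v_{s,i}$ has $d_{G_{A^{(n)}}}(v_{s,i})+1$ neighbours inside $R$.

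Next I would classify each triangle of $G_{A^{(n+1)}}$ by the number of its vertices lying in $R$. Triangles with no vertex in $R$ are precisely the triangles of $G_{A^{(n)}}$, contributing $h(G_{A^{(n)}})$; triangles with all three vertices in $R$ are triangles of $K_{2^n}$, contributing $\binom{2^n}{3}$. These two cases are immediate and account for the first two summands. The term $4|E(G_{A^{(n)}})|$ is intended to capture the mixed triangles, and its origin is transparent: for each edge $uv\in E(G_{A^{(n)}})$ the parallel edges $uu^{\ast},vv^{\ast}$, the cross edges $uv^{\ast},vu^{\ast}$, and the edge $u^{\ast}v^{\ast}$ of $K_{2^n}$ together create the four triangles $\{u,v,u^{\ast}\}$, $\{u,v,v^{\ast}\}$, $\{u,u^{\ast},v^{\ast}\}$ and $\{v,u^{\ast},v^{\ast}\}$; summing over all edges gives $4|E(G_{A^{(n)}})|$, and one checks that distinct edges never produce the same triangle.

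The delicate and decisive step — and the place where I expect the real difficulty to lie — is to prove that these four-per-edge triangles exhaust the mixed ones, and here care shows they do \emph{not}. A triangle $\{v_{s,i},w^{\ast},x^{\ast}\}$ with two $R$-vertices exists whenever $w,x$ are two distinct neighbours of $v_{s,i}$ in $G_{A^{(n)}}$, so the true count of such triangles is $\sum_{v}\binom{d_{G_{A^{(n)}}}(v)+1}{2}$, which exceeds the four-per-edge tally by $\sum_{v}\binom{d_{G_{A^{(n)}}}(v)}{2}$ (the number of paths of length two). Likewise a triangle $\{u,v,w^{\ast}\}$ with two $O$-vertices and $w\notin\{u,v\}$ arises exactly when $w$ is a common neighbour of $u$ and $v$, contributing a further $3\,h(G_{A^{(n)}})$. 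Carrying the count through faithfully therefore yields
\begin{equation*}
h(G_{A^{(n+1)}}) = 4\,h(G_{A^{(n)}}) + \binom{2^n}{3} + 4|E(G_{A^{(n)}})| + \sum_{v\in V(G_{A^{(n)}})}\binom{d_{G_{A^{(n)}}}(v)}{2},
\end{equation*}
rather than the displayed expression. I would validate this on the transition $G_{A^{(2)}}\to G_{A^{(3)}}$, where $h(G_{A^{(2)}})=0$, $|E(G_{A^{(2)}})|=2$ and the degree sequence $(1,2,1)$ give $0+\binom{4}{3}+8+1=13$, matching a direct enumeration of $G_{A^{(3)}}$; the displayed formula instead predicts $12$. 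In short, the main obstacle is the collision-free enumeration of the mixed triangles, and it is precisely this step that exposes the additional $3\,h(G_{A^{(n)}})$ and path-of-length-two contributions missing from the stated recursion.
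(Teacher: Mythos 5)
Your decomposition is correct, and the discrepancy you found is real: the recursion asserted in the theorem is false, and the paper's own proof commits exactly the undercount you identify. The paper counts only the $\binom{2^n}{3}$ triangles inside the replica clique and, for each edge $uv$ of $G_{A^{(n)}}$, the four triangles of the $K_4$ on $\{u,v,u^{\ast},v^{\ast}\}$; it never verifies that these exhaust the mixed triangles. They do not, for precisely the two reasons you give: a triangle $\{u,v,w^{\ast}\}$ also arises whenever $w$ is a common neighbour of the adjacent pair $u,v$ (contributing $3h(G_{A^{(n)}})$), and a triangle $\{v,w^{\ast},x^{\ast}\}$ arises for every pair of distinct replica-neighbours of $v$, i.e.\ $\sum_{v}\binom{d(v)+1}{2}$ in total, of which only $2|E(G_{A^{(n)}})|$ are captured by the $K_4$'s. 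Your corrected recursion $h(G_{A^{(n+1)}})=4h(G_{A^{(n)}})+\binom{2^n}{3}+4|E(G_{A^{(n)}})|+\sum_{v}\binom{d_{G_{A^{(n)}}}(v)}{2}$ is what the complete case analysis yields. (One small wording slip: for the one-$O$-two-$R$ triangles you should say ``two distinct members of the closed neighbourhood of $v$,'' which is what your formula $\binom{d(v)+1}{2}$ actually counts.)

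One caveat: the theorem is asserted only for $n\ge 3$, and your numerical test is the transition $n=2\to 3$, which lies outside that range. The failure persists inside the stated range, however. One has $h(G_{A^{(3)}})=13$, $|E(G_{A^{(3)}})|=15$, and degree sequence $(3,3,3,5,5,5,6)$, so the paper's formula predicts $h(G_{A^{(4)}})=13+\binom{8}{3}+60=129$, whereas your formula gives $52+56+60+54=222$. A complementary count over the $\binom{15}{3}=455$ vertex triples of $G_{A^{(4)}}$ (classifying triples by their number of disjoint pairs: $151+72+10=233$ non-triangles) confirms $h(G_{A^{(4)}})=222$. So there is no gap in your argument; the gap is in the published statement and proof, and your classification of triangles by the number of replica vertices they contain is the correct way to carry out the count.
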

\begin{proof}
	Consider the set-graph $G_{A^{(n)}}$ whose primitive hole number is denoted by $h(G_{A^{(n)}})$. Extending $G_{A^{(n)}}$ to $G_{A^{(n+1)}}$ will only increase the number of primitive holes. What we need here to determine the number of additional primitive holes formed on extending $G_{A^{(n)}}$ to $G_{A^{(n+1)}}$. This calculation is done as follows.
	
	\vspace{0.2cm}
	
	The set of replica vertices together with the vertex $v_{(n+1),1}$ induce a complete subgraph $K_{|V(G_{A^{(n)}}| +1}$ and hence an additional $\binom{2^n}{3}$ primitive holes are added to the extended graph. Finally, for each edge $v_{s,i}v_{t,j} \in E(G_{A^{(n)}})$ the vertices $v_{s,i}, v_{t,j}, v^{\ast}_{s,i}, v^{\ast}_{t,j}$ induce a $K_4$ subgraph and the number of primitive holes thus formed is $\binom{4}{3} = 4$. Hence, a further $4|E(G_{A^{(n)}}|$ primitive holes are added to the extended graph. This completes the proof.
\end{proof}

\ni Next, we introduce the following notions for a set-graph as follows.

\begin{defn}{\rm
		Let $G=G_{A^{(n)}}$ be a set-graph on a non-empty set $A^{(n)}$ and let $A_{s,i}^{(n)}$ be an arbitrary subset of the set $A^{(n)}$. The \textit{characteristic function} of a subset $A_{t,j}^{(n)}$ of $A^{(n)}$ with respect to $A_{s,i}^{(n)}$, denoted by $\xi_{A_{s,i}^{(n)}}(A_{t,j}^{(n)})$, is defined as 
		\begin{equation*}
		\xi_{A_{s,i}^{(n)}}(A_{t,j}^{(n)})=
		\begin{cases}
		1 & \text{if} \quad A_{s,i}^{(n)}\cap A_{t,j}^{(n)}\ne \emptyset\\
		0 & \text{if} \quad A_{s,i}^{(n)}\cap A_{t,j}^{(n)}=\emptyset.
		\end{cases}
		\end{equation*} }
\end{defn}

\begin{defn}{\rm
		The \textit{tightness number} of a subset $A^{(n)}_{s,k}$, denoted $\varsigma(A_{s,k}^{(n)})$ is the number of subsets distinct from $A_{s,k}^{(n)}$ for which the intersection with $A_{s,k}^{(n)}$ is non-empty. Hence, $\varsigma(A_{s,k}^{(n)}) = \sum\limits_{j\ne k}\xi_{A_{s,i}^{(n)}}(A_{t,j}^{(n)})$.}
\end{defn}

We note that in terms of the definition of a set-graph and for the vertex $v_{s,i}$ corresponding to the subset $A_{s,k}^{(n)}$ we have, $d_{G_{A^{(n)}}}(v_{s,i}) = \varsigma(A_{s,k}^{(n)})$. Also, we have that $|E(G_{A^{(n)}})| = \frac{1}{2}\sum\limits_{1 \le k \le 2^n-1}\varsigma(A_{s,k}^{(n)})$.

\vspace{0.25cm}

The next theorem enables us to employ a step-wise recursive formula to determine the tightness number of all non-empty subsets of $A^{(n+1)}$ if the the tightness number of all non-empty subsets of $A^{(n)}$ are known.

\begin{thm}
	Consider a set-graph $G_{A^{(n)}}, n \ge 1$ and its extended set-graph $G_{A^{(n+1)}}$. Then we have 
	\begin{enumerate}\itemsep0mm
		\item[(i)] $\varsigma (\{a_{n+1}\}) = 2^n - 1$,
		\item[(ii)] For each erstwhile subset $A^{(n)}_{s,i}$ with $\varsigma (A^{(n)}_{s,i}) = k$ in $G_{A^{(n)}}$, we have $\varsigma (A_{s,i}^{(n+1)}) = 2k+1$ in $G_{A^{(n+1)}}$,
		\item[(iii)] For a replica vertex say, $v^{\ast}_{s,i}$ representing the new subset $A_{s,i}^{(n)} \cup \{a_{n+1}\}$ we have $\varsigma (A_{s,i}^{(n)}\cup \{a_{n+1}\}) = 2^n+k$.
	\end{enumerate}
\end{thm}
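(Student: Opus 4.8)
The plan is to convert every tightness number into a vertex degree via the identity $\varsigma(A_{s,k}^{(n)}) = d_{G_{A^{(n)}}}(v_{s,i})$ noted above, and then to read off the degrees in $G_{A^{(n+1)}}$ directly from the five-step extension procedure of Theorem~\ref{T-SGRD}. The key observation is that the edges of $G_{A^{(n+1)}}$ split into four disjoint classes: (a) the edges of the complete graph $K_{2^n}$ induced on the $2^n-1$ replica vertices together with the new vertex $v_{1,(n+1)}$ corresponding to $\{a_{n+1}\}$ (step (ii)); (b) the $2^n-1$ parallel-linkage edges $v_{s,i}v^{\ast}_{s,i}$ (step (iii)); (c) the $2|E(G_{A^{(n)}})|$ cross edges, one pair $v_{s,l}v^{\ast}_{t,l'}$, $v_{t,l'}v^{\ast}_{s,l}$ for each original edge $v_{s,l}v_{t,l'}$ (step (iv)); and (d) the original edges of $G_{A^{(n)}}$, which survive unchanged. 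Counting the neighbours of a given vertex then amounts to summing its contributions from these four classes.

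For (i), the vertex corresponding to $\{a_{n+1}\}$ participates only in class (a): it lies in $K_{2^n}$ and is joined to each of the other $2^n-1$ vertices of that complete subgraph, while classes (b)--(d) involve only original and replica vertices. Hence $\varsigma(\{a_{n+1}\}) = 2^n-1$. For (ii), fix an erstwhile vertex $v_{s,i}$ with $\varsigma(A_{s,i}^{(n)})=k$, so it has $k$ neighbours among the original vertices (class (d)). In addition it acquires its single parallel-linkage edge to $v^{\ast}_{s,i}$ (class (b)) and exactly one cross edge $v_{s,i}v^{\ast}_{t,l'}$ for each of its $k$ original neighbours $v_{t,l'}$ (class (c)); it gains nothing from class (a). Summing gives $k+1+k = 2k+1$, which is the claimed value of $\varsigma(A_{s,i}^{(n+1)})$.

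For (iii), fix a replica vertex $v^{\ast}_{s,i}$ representing $A_{s,i}^{(n)}\cup\{a_{n+1}\}$, again with $\varsigma(A_{s,i}^{(n)})=k$. Because it lies in the complete subgraph $K_{2^n}$ it is adjacent to the remaining $2^n-2$ replicas and to $v_{1,(n+1)}$, contributing $2^n-1$ from class (a); it receives one parallel-linkage edge from $v_{s,i}$ (class (b)); and it receives one cross edge $v_{t,l'}v^{\ast}_{s,i}$ from each of the $k$ original neighbours $v_{t,l'}$ of $v_{s,i}$ (class (c)), with no contribution from class (d). The total is $(2^n-1)+1+k = 2^n+k$, as required. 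The one point that must be verified with care is that the four edge classes are genuinely disjoint and that for both (ii) and (iii) the relevant cross-edge contribution is exactly $k$ and is not accidentally conflated with the parallel linkage; this holds because a parallel-linkage edge always joins an original vertex to its \emph{own} replica, whereas a cross edge joins an original vertex to the replica of a \emph{distinct} neighbour. I expect this bookkeeping --- rather than any deep idea --- to be the only real obstacle, and an equivalent direct count, partitioning the non-empty subsets of $A^{(n+1)}$ into original subsets, replicas, and $\{a_{n+1}\}$ and using that exactly $k+1$ non-empty subsets of $A^{(n)}$ meet $A_{s,i}^{(n)}$, provides an independent check.
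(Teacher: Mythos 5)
Your proof is correct and follows essentially the same route as the paper's: both arguments read the three tightness numbers off the five-step extension procedure of Theorem~\ref{T-SGRD}, counting contributions from the complete subgraph $K_{2^n}$ on the replicas together with $v_{1,(n+1)}$, the parallel-linkage edges, the cross edges, and the surviving original edges. Your version is somewhat more explicit about the disjointness and exhaustiveness of these four edge classes, but the underlying count ($2^n-1$; $k+1+k$; $(2^n-1)+1+k$) is identical to the paper's.
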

\begin{proof} Let $G_{A^{(n)}}$ be a set-graph and $G_{A^{(n+1)}}$ be its extended graph obtained by introducing a new element, say $a_{n+1}$, to the set $A^{(n)}$. Then, 
	
	\begin{enumerate}\itemsep0.25cm
		\item[(i)] To generate the set-graph $G_{A^{(n+1)}}$ by extending the set-graph $G_{A^{(n)}}$, we initially add the subsets $\{a_{n+1}\}$ and $A_{s,i}^{(n)} \cup \{a_{n+1}\}$, for all applicable values of $s$ and $i$. Clearly, $\{a_{n+1}\} \cap (A_{s,i}^{(n)}\cup \{a_{n+1}\}) \ne \emptyset$. So, $\varsigma (\{a_{n+1}\}) \ge 2^n -1$. Also,  $\{a_{n+1}\} \cap A_{s,i}^{(n)}=\emptyset$. Therefore, we have $\varsigma (\{a_{n+1}\}) = 2^n -1$.
		
		\item[(ii)] If $\varsigma (A_{s,i}^{(n)}) = k$ in $G_{A^{(n)}}$, the subset $A_{s,i}^{(n)}$ has non-zero intersections with exactly $k$ distinct subsets of $A^{(n)}$. Since in the replication, we have $A_{s,i}^{(n)}\cup \{a_{n+1}\}$ together with the subsets $A_k^{(n)}\cup \{a_{n+1}\}$, for all $k$,  the result follows.
		
		\item[(iii)] The replica vertices are $2^n-1$ in number and induce a complete graph together with vertex $v_{1,(n+1)}$. This partially represents $2^n -1$ non-zero intersections in respect of any subset say, $A_{s,i}^{(n)}\cup \{a_{n+1}\}$ corresponding to any replica vertex say, $v^{\ast}_{s,i}$. Clearly, $(A_{s,i}^{(n)} \cup \{a_{n+1}\}) \cap A_{s,i}^{(n)} \ne \emptyset$ and $(A_{s,i}^{(n)}\cup \{a_{n+1}\}) \cap A_{t,j}^{(n)}\ne \emptyset$, for all $A_{s,i}^{(n)}\cap A_{t,j}^{(n)}\ne \emptyset$ . It implies that an additional $(k+1)$ non-zero intersections exist in respect of $A_{s,i}^{(n)}\cup \{a_{n+1}\}$. Hence, $\varsigma (A_{s,i}^{(n)}\cup \{a_{n+1}\}) = 2^n-1+(k+1)= 2^n+k$.
	\end{enumerate}
	\ni This completes the proof.
\end{proof}

\section{Certain Parameters of Set-Graphs}

Let $G$ be a given non-trivial finite graph. The \textit{chromatic number}, denoted by $\chi(G)$, of $G$ is the minimum $k$ for which $G$ is $k$-colourable.

\begin{thm}
	The chromatic number of a set-graph $G_{A^{(n)}}$ is $\chi(G_{A^{(n)}}) = 2^{n-1}$.
\end{thm}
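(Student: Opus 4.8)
The plan is to establish matching lower and upper bounds on $\chi(G_{A^{(n)}})$, both equal to $2^{n-1}$.

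For the lower bound, I would invoke Proposition \ref{P-SGKN}, which guarantees that $G_{A^{(n)}}$ contains a complete subgraph $K_{2^{n-1}}$; concretely, the family of all $2^{n-1}$ subsets of $A^{(n)}$ containing a fixed element $a_1$ pairwise intersect and hence induce a clique on $2^{n-1}$ vertices. Since the chromatic number of any graph is at least its clique number, this gives $\chi(G_{A^{(n)}}) \ge 2^{n-1}$ at once.

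The substance of the argument lies in the matching upper bound, for which I would exhibit an explicit proper colouring using exactly $2^{n-1}$ colours. The key observation is that two vertices $v_{s,i}$ and $v_{t,j}$ are non-adjacent precisely when their underlying subsets are disjoint, so every colour class must be a family of pairwise disjoint non-empty subsets. I would exploit complementation: for every proper non-empty subset $S \subsetneq A^{(n)}$ the complement $A^{(n)} \setminus S$ is again non-empty and satisfies $S \cap (A^{(n)} \setminus S) = \emptyset$, so $S$ and its complement correspond to a non-adjacent pair. The $2^n - 2$ proper non-empty subsets thus split into $2^{n-1} - 1$ complementary pairs, and I would give each such pair its own colour; each of these $2^{n-1}-1$ colour classes is independent because the two subsets of a pair are disjoint.

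The only subset left uncoloured is $A^{(n)}$ itself. Since $A^{(n)} \cap S = S \ne \emptyset$ for every non-empty $S$, the corresponding vertex $v_{n,1}$ is adjacent to all other vertices (as already noted in Case-2 of the proof of Theorem \ref{T-SGDV1} and in Corollary \ref{C-UDG}), so it must form its own colour class. This contributes one further colour, yielding a proper colouring with $(2^{n-1} - 1) + 1 = 2^{n-1}$ colours, whence $\chi(G_{A^{(n)}}) \le 2^{n-1}$. Combining this with the lower bound completes the proof. I expect the main obstacle to be precisely this upper bound, namely recognising that the complementation pairing produces independent classes and correctly accounting for the universal vertex $v_{n,1}$, which cannot share a colour with any other vertex and must be treated separately; the remaining work is a routine count of subsets.
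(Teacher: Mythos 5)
Your proof is correct, but it takes a genuinely different route from the paper. The paper proceeds by induction on $n$: it verifies the small cases, assumes $\chi(G_{A^{(k)}})=2^{k-1}$, and then colours $G_{A^{(k+1)}}$ by first colouring the clique formed by the replica vertices together with $v_{1,(k+1)}$ using $2^k$ colours and then arguing that each erstwhile vertex can inherit the colour of some non-adjacent replica vertex; that last step is stated rather informally (``it is always possible to find at least one pair of erstwhile--replica vertices which are non-adjacent''). Your argument is direct and non-inductive: the lower bound comes from the explicit clique of all $2^{n-1}$ subsets containing a fixed element (consistent with Proposition \ref{P-SGKN}), and the upper bound comes from an explicit proper colouring in which each of the $2^{n-1}-1$ complementary pairs $\{S,\,A^{(n)}\setminus S\}$ of proper non-empty subsets forms an independent colour class (disjoint sets give non-adjacent vertices, and $S\ne A^{(n)}\setminus S$ since $S\ne\emptyset$), with the universal vertex $v_{n,1}$ in a class of its own. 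This buys you a cleaner and fully rigorous proof: the colouring is completely explicit, there is no appeal to an unverified reuse of colours, and as a bonus it exhibits that $\chi(G_{A^{(n)}})$ equals the clique number. The paper's inductive framing, on the other hand, fits its recursive replica-vertex machinery and reuses Proposition \ref{P-SGKN} and Theorem \ref{T-SGRD} directly, but your version is the one I would keep.
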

\begin{proof}
	It is easy to see that $\chi(G_{A^{(1)}}) = 1 = 2^{1-1}$, $\chi(G_{A^{(2)}})=2=2^{2-1}$, $\chi(G_{A^{(3)}})=4= 2^{3-1}$ (See figure \ref{fig-1}). Assume the result holds for the set-graph $G_{A^{(k)}}$. Therefore, we have $\chi(G_{A^{(k)}}) = 2^{k-1}$.
	
	\vspace{0.2cm}
	
	Now consider the set-graph $G_{A^{(k+1)}}$. From the steps to be followed to extend from $G_{A^{(k)}}$ to $G_{A^{(k+1)}}$ (See proof of Theorem \ref{T-SGRD}), we have the erstwhile vertices of $G_{A^{(k)}}$, and in addition, the replica vertices corresponding to the vertices of $G_{A^{(k)}}$ and one more vertex $v_{1,(k+1)}$. From the proof of Proposition \ref{P-SGKN} we can notice that the replica vertices and vertex $v_{1,(k+1)}$ induce a largest complete subgraph, $K_{2^k}$ in the extended graph of $G_{A^{(n)}}$. We also note that the replica vertices and vertex $v_{1,k}$ form a second largest complete subgraph, $K_{2^k}$. 
	
	\vspace{0.2cm}
	
	Since the vertices $v_{1,k}$ and $v_{1,{k+1}}$ are not adjacent in $G_{A^{(n)}}$, both of them have the same colour, say $c_1$ and colour the replica vertices by the colours $c_2, c_3, c_4, \ldots, c_{2^k}$. Since, no other largest complete graph exists it is always possible to find at least one pair of erstwhile-replica vertices which are non-adjacent. Hence the erstwhile vertex can carry the colour of such a replica vertex. This can be done in such a way that two adjacent erstwhile vertices do not carry the same colour by using the colours $c_2, c_3, c_4, ..., c_{2^k}$ accept for the colour of $v_{n,1}$, exhaustively. So the result $\chi(G_{A^{(k+1)}}) = 2^k = 2^{(k+1)-1}$ follows. Hence, the main result follows by induction.
\end{proof}

An \textit{independent set} of  graph $G$ is a set of mutually non-adjacent vertices of $G$. The \textit{independence number}, denoted by $\alpha(G)$, of $G$ is the cardinality of a maximal independent set of $G$. The independence number of a set-graph is determined in the following theorem.

\begin{thm}
	The independence number of a set-graph $G_{A^{(n)}}$ is $\alpha(G_{A^{(n)}}) = n$.
\end{thm}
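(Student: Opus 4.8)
The plan is to translate the graph-theoretic notion of independence into the set-theoretic language underlying the definition of $G_{A^{(n)}}$. By the definition of a set-graph, two vertices $v_{s,i}$ and $v_{t,j}$ are adjacent precisely when the corresponding subsets meet, that is, when $A^{(n)}_{s,i}\cap A^{(n)}_{t,j}\ne\emptyset$. Hence a set $I$ of vertices is independent in $G_{A^{(n)}}$ if and only if the family of subsets corresponding to the vertices of $I$ is pairwise disjoint. The problem therefore reduces to determining the maximum cardinality of a family of pairwise disjoint non-empty subsets of the $n$-element set $A^{(n)}$.

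For the upper bound I would argue as follows. Suppose $B_1,B_2,\ldots,B_k$ are pairwise disjoint non-empty subsets of $A^{(n)}$. Since each $B_r$ is non-empty, choose one element from each; because the $B_r$ are pairwise disjoint, these chosen elements are distinct, so they give $k$ distinct elements of $A^{(n)}$. Consequently $k\le |A^{(n)}|=n$. Translating back, any independent set of $G_{A^{(n)}}$ has at most $n$ vertices, and therefore $\alpha(G_{A^{(n)}})\le n$.

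For the lower bound, consider the $n$ vertices $v_{1,1},v_{1,2},\ldots,v_{1,n}$ corresponding to the singleton subsets $\{a_1\},\{a_2\},\ldots,\{a_n\}$. These singletons are pairwise disjoint, so by the translation above the corresponding vertices form an independent set of size $n$; this is exactly the edgeless layer already identified in the first paragraph of the proof of Theorem~\ref{T-SGDV}, where it is shown that no edges are induced among $v_{1,1},\ldots,v_{1,n}$. Hence $\alpha(G_{A^{(n)}})\ge n$. Combining the two bounds gives $\alpha(G_{A^{(n)}})=n$.

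The only step requiring any care is the upper bound, where one must make the selection of distinct representatives explicit. I do not expect a genuine obstacle here: disjointness forces the chosen representatives to be distinct automatically, so the bound $k\le n$ is essentially a one-line pigeonhole observation, and the whole argument rests on the clean equivalence between independent sets of $G_{A^{(n)}}$ and pairwise disjoint subfamilies of $\cS$.
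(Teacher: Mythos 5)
Your proof is correct, and for the lower bound it coincides with the paper: both you and the authors exhibit the $n$ singleton vertices $v_{1,1},\ldots,v_{1,n}$ as an independent set, citing the observation from the proof of Theorem~\ref{T-SGDV} that no edges are induced among them. Where you genuinely diverge is the upper bound. The paper argues only that every vertex outside $I=\{v_{1,1},\ldots,v_{1,n}\}$ is adjacent to some vertex of $I$, i.e.\ that $I$ is a \emph{maximal} (non-extendable) independent set, and then concludes $\alpha(G_{A^{(n)}})=n$; this is a logical gap, since a maximal independent set need not have maximum cardinality. You instead translate independence into pairwise disjointness of the corresponding non-empty subsets and bound the size of any such disjoint family by $n$ via a system of distinct representatives, which is exactly the missing argument needed to show that \emph{no} independent set exceeds $n$ vertices. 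Your route is therefore not just different but strictly more complete: it buys a genuine proof of the upper bound, whereas the paper's version establishes only maximality of one particular independent set.
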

\begin{proof}
	Let $G= G_{A^{(n)}}$ be a given set-graph. Then, as explained in the proof of Theorem \ref{T-SGDV}, the vertices $v_{1,1}, v_{1,2}, v_{1,3}, ..., v_{1,n}$, corresponding to the singleton subsets of $A^{(n)}$, are pairwise non-adjacent. Hence, the set $I= \{v_{1,1}, v_{1,2}, v_{1,3}, ..., v_{1,n}\}$ is an independent set. By the Definition \ref{D-SG}, we note that any vertex in $V(G)-I$ is adjacent to at least one vertex in $I$. Therefore, $I$ is the maximal set of mutually non-adjacent vertices and hence is the maximal independent set in $G$. Hence, $\alpha(G_{A^{(n)}}) = n$.
\end{proof}

A \textit{domianting set} of a graph is a set of vertices $D$ such that every vertex of $G$ is either in $D$ or is adjacent to at least one vertex in $D$. The \textit{domination number}, denoted by $\gamma(G)$, of a graph $G$ is the cardinality of the minimal dominating set of $G$. The following discusses the domination number of a set-graph.

\begin{thm}
	The domination number of a set-graph$G_{A^{(n)}}$ is $\gamma(G_{A^{(n)}}) = 1$.
\end{thm}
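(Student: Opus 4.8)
The plan is to exhibit a single vertex that is adjacent to every other vertex of the set-graph, which immediately forces the domination number to equal $1$. The natural candidate is the vertex $v_{n,1}$ corresponding to the full set $A^{(n)}$ itself, whose special role as the unique maximum-degree vertex was already isolated in Corollary~\ref{C-UDG} and Theorem~\ref{T-SGDV1}.

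First I would observe that for every non-empty subset $A^{(n)}_{s,i} \subseteq A^{(n)}$ we have $A^{(n)} \cap A^{(n)}_{s,i} = A^{(n)}_{s,i} \ne \emptyset$. By the defining adjacency rule of a set-graph (Definition~\ref{D-SG}), this means $v_{n,1}$ is joined by an edge to every vertex $v_{s,i}$ with $(s,i) \ne (n,1)$. Equivalently, $d_{G_{A^{(n)}}}(v_{n,1}) = |V(G_{A^{(n)}})| - 1 = 2^n - 2 = \Delta(G_{A^{(n)}})$, recovering the maximum-degree computation of Theorem~\ref{T-SGDV1}.

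Next I would conclude that $D = \{v_{n,1}\}$ is a dominating set: every vertex of $G_{A^{(n)}}$ is either $v_{n,1}$ itself or adjacent to it. Hence $\gamma(G_{A^{(n)}}) \le 1$. Since any non-trivial graph requires at least one vertex in any dominating set, we also have $\gamma(G_{A^{(n)}}) \ge 1$, and the two bounds give $\gamma(G_{A^{(n)}}) = 1$.

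I do not expect any genuine obstacle here; the entire content reduces to the single intersection observation $A^{(n)} \cap A^{(n)}_{s,i} = A^{(n)}_{s,i}$, which is precisely why the full-set vertex is universal. The only point worth stating carefully is that the graph is assumed non-singleton, so that the lower bound $\gamma(G_{A^{(n)}}) \ge 1$ is meaningful, a hypothesis already built into the standing conventions on set-graphs in this paper.
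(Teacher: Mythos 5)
Your proposal is correct and follows essentially the same route as the paper: both identify $v_{n,1}$, the vertex corresponding to the full set $A^{(n)}$, as a universal vertex and conclude that $\{v_{n,1}\}$ is a dominating set. The only cosmetic difference is that you derive universality directly from the intersection observation $A^{(n)} \cap A^{(n)}_{s,i} = A^{(n)}_{s,i} \ne \emptyset$, whereas the paper cites Corollary~\ref{C-UDG}; the underlying argument is identical.
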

\begin{proof}
	Let $G=G_{A^{(n)}}$ be a given set-graph. By Corollary \ref{C-UDG}, the vertex $v_{n,1}$, corresponding to the $n$-element set $A^{(n)}$, is the unique vertex in the set-graph $G$ that is adjacent to all other vertices of the set-graph $G$.  Therefore, the singleton set $\{v_{n,1}\}$ is the minimal set such that every vertex of $G$ is adjacent to the unique element in the set $D$. Therefore, $\gamma(G_{A^{(n)}})=1$.
\end{proof}

Another parameter we consider here is the bondage number of a graph $G$, which is denoted by $b(G)$ defined as the minimum number of edges to be removed to increase the domination number $\gamma(G)$ by $1$.

\begin{thm}
	The bondage number of a set-graph $G_{A^{(n)}}$ is $b(G_{A^{(n)}}) =1$.
\end{thm}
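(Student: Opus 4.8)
The plan is to establish $b(G_{A^{(n)}}) = 1$ by exhibiting a single edge whose deletion strictly increases the domination number, and then to invoke the trivial lower bound $b(G_{A^{(n)}}) \ge 1$. Recall from the preceding theorem that $\gamma(G_{A^{(n)}}) = 1$, and that by Corollary \ref{C-UDG} the vertex $v_{n,1}$, corresponding to the full set $A^{(n)}$, is the \emph{unique} vertex adjacent to every other vertex of $G_{A^{(n)}}$. The first thing I would record is that $\{v_{n,1}\}$ is therefore the only dominating set of cardinality one: a single vertex dominates the whole graph precisely when it is universal (adjacent to all remaining vertices), and $v_{n,1}$ is the only such vertex.

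Next I would delete one edge incident with $v_{n,1}$, for instance the edge $v_{n,1}v_{1,1}$ joining $v_{n,1}$ to the vertex corresponding to the singleton subset $\{a_1\}$, and set $G' = G_{A^{(n)}} - v_{n,1}v_{1,1}$. In $G'$ the vertex $v_{n,1}$ is no longer adjacent to $v_{1,1}$, so it ceases to be universal and $\{v_{n,1}\}$ is no longer dominating. The crux is then to argue that $G'$ admits no singleton dominating set whatsoever, i.e.\ that $G'$ contains no universal vertex.

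This is the one step that warrants care, and it rests entirely on the uniqueness assertion of Corollary \ref{C-UDG}: deleting an edge can only remove adjacencies, never create them, and every vertex other than $v_{n,1}$ already failed to be adjacent to at least one vertex of $G_{A^{(n)}}$. Hence no such vertex can become universal in $G'$, while $v_{n,1}$ has just lost its universality. It follows that $G'$ has no dominating set of size one, so $\gamma(G') \ge 2 > 1 = \gamma(G_{A^{(n)}})$. Since the removal of a single edge already strictly increases the domination number, $b(G_{A^{(n)}}) \le 1$; together with the obvious bound $b(G_{A^{(n)}}) \ge 1$ (deleting no edges leaves $\gamma$ unchanged) this gives $b(G_{A^{(n)}}) = 1$. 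I expect the only genuinely delicate point to be the verification that no new universal vertex can emerge after the deletion, which is exactly where the uniqueness of $v_{n,1}$ does the work.
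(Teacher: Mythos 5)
Your proof is correct and follows essentially the same route as the paper: delete a single edge incident with the unique universal vertex $v_{n,1}$ and observe that no one-vertex dominating set survives. The paper states this in one sentence without justification, whereas you supply the (worthwhile) missing detail that edge deletion cannot create a new universal vertex, so the uniqueness from Corollary \ref{C-UDG} is exactly what forces $\gamma$ to increase.
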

\begin{proof}
	Since $\{v_{n,1}\}$ is the minimal dominating set of the set-graph $G=G_{A^{(n)}}$, the removal of any edge  $v_{s,i}v_{n,1}$ will increase the domination number by $1$ in the reduced graph $G_{A^{(n)}} - v_{s,i}v_{n,1}$.
\end{proof}

Another parameter we are going to discuss here is the McPhersion Number of undirected graphs. For this, let us now recall the definition McPhersion Number, as given in \cite{JS1}. 

\begin{defn}{\rm 
		\cite{JS1} The \textit{McPherson recursion} is a series of \textit{vertex explosions} such that on the first iteration a vertex $v \in V(G)$ explodes to arc (directed edges) to all vertices $u \in V(G)$ for which the edge $vu \notin E(G)$, to obtain the mixed graph $G'_1$. Now $G'_1$ is considered on the second iteration and a vertex $w \in V(G'_1) = V(G)$ may explode to arc to all vertices $z \in V(G'_1)$ if edge $wz \notin E(G)$ and arc $(w,z)$ or $(z,w) \notin E(G'_1)$.
		
		\vspace{0.25cm}
		
		The \textit{McPherson number}, denoted by $\Upsilon(G)$,  of a simple connected graph $G$ is the minimum number of iterative vertex explosions say $l$, to obtain the mixed graph $G'_l$ such that the underlying graph $G^{\ast}_l \cong K_n$.}
\end{defn}

The McPherson number of a set-graph $G_{A^{(n)}}$ is determined in the following theorem. 

\begin{thm}
	For a set-graph $G_{A^{(n)}}$ we have $\Upsilon(G_{A^{(n)}}) = 2^{n-1}-1$.
\end{thm}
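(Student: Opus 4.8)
The plan is to reduce the computation to two quantities the paper has already determined: the order of the set-graph and the size of its largest clique. The central observation is that a single vertex explosion makes the exploded vertex adjacent, in the underlying graph $G^{\ast}$, to every other vertex, since arcs are added from it to all of its former non-neighbours. Consequently, if we explode the vertices of a set $S \subseteq V(G)$ in any order, the only pairs that can remain non-adjacent in the resulting underlying graph are those lying entirely within $V(G) \setminus S$. Hence the underlying graph is complete if and only if $V(G) \setminus S$ induces a clique in $G$.

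From this I would extract the general identity $\Upsilon(G) = |V(G)| - \omega(G)$, where $\omega(G)$ denotes the clique number of $G$. The lower bound follows because any exploding set $S$ that yields the complete graph $K_{|V(G)|}$ must leave $V(G) \setminus S$ a clique, whence $|V(G) \setminus S| \le \omega(G)$ and so $|S| \ge |V(G)| - \omega(G)$. For the upper bound, I would fix a maximum clique $Q$ with $|Q| = \omega(G)$ and explode, one per iteration, each of the $|V(G)| - \omega(G)$ vertices outside $Q$: every exploded vertex becomes universally adjacent, and the vertices of $Q$ are mutually adjacent by assumption, so $G^{\ast}$ is complete after exactly $|V(G)| - \omega(G)$ iterations.

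It then remains to substitute the known values for a set-graph. By the proposition on the order of a set-graph we have $|V(G_{A^{(n)}})| = 2^n - 1$, and by Proposition \ref{P-SGKN} the largest complete subgraph is $K_{2^{n-1}}$, so $\omega(G_{A^{(n)}}) = 2^{n-1}$. Substituting into the identity gives $\Upsilon(G_{A^{(n)}}) = (2^n - 1) - 2^{n-1} = 2^{n-1} - 1$, as claimed.

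The step I expect to require the most care is verifying that the order-independence and the no-duplicate-arc clause of the McPherson recursion do not disturb the count. I would check explicitly that if a vertex $z$ has already exploded before a later vertex $w$, the pre-existing arc $(z,w)$ merely suppresses the redundant arc $(w,z)$ during $w$'s explosion, leaving the underlying adjacency between $w$ and $z$ unchanged. Thus each explosion still contributes exactly one universally-adjacent vertex regardless of order, which is precisely what the identity $\Upsilon(G) = |V(G)| - \omega(G)$ relies upon.
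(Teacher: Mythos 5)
Your proposal is correct, and it is a cleaner and more rigorous version of what the paper does. The paper argues directly from the extension construction of Theorem \ref{T-SGRD}: the replica vertices together with the new vertex induce a complete subgraph on $2^{n-1}$ vertices needing no explosions, and it then asserts that ``at least all the erstwhile vertices require vertex explosions,'' concluding $\Upsilon(G_{A^{(n)}}) = 2^{n-1}-1$. That phrasing of the lower bound is actually imprecise --- it is not that those particular $2^{n-1}-1$ vertices must each explode (one could equally well explode some replica vertices and spare some erstwhile ones, provided the unexploded set induces a clique) --- but the count it produces is right. You instead isolate the general identity $\Upsilon(G) = |V(G)| - \omega(G)$, with both inequalities argued properly: the unexploded set must induce a clique (lower bound), and exploding the complement of a maximum clique suffices (upper bound). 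Combined with $|V(G_{A^{(n)}})| = 2^n-1$ and $\omega(G_{A^{(n)}}) = 2^{n-1}$ from Proposition \ref{P-SGKN}, this gives the result. Your closing check that the no-duplicate-arc clause does not affect the underlying adjacency is exactly the detail needed to make the identity order-independent. In short, both proofs rest on the same clique of size $2^{n-1}$; yours supplies the correct justification for the lower bound that the paper's wording only gestures at, and yields a reusable general formula besides.
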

\begin{proof}
	Consider the set-graph $G_{A^{(n-1)}}$ which has $2^{n-1}-1$ vertices. On extending the set-graph $G_{A^{(n)}}$, the replica vertices, as explained in Theorem \ref{T-SGRD}, together with vertex $v_{n,1}$ induce a complete subgraph and hence no further vertex explosions are required to ensure complete induced by these vertices. However, at least all the erstwhile vertices require vertex explosions to ensure complete connectivity amongst themselves and the replica vertices. Hence, $\Upsilon(G_{A^{(n)}}) = 2^{n-1} -1$. 
\end{proof}

\section{Conclusion}
We have discussed particular types of graphs called set-graphs and studied certain characteristics and structural properties of these graphs. The study seems to be promising as it can be extended to certain standard graph classes and certain graphs that are associated with the given graphs. More problems in this area are still open and hence there is a wide scope for further studies. Now, we have the notion of {M\`ela numbers} as follows.  

\begin{defn}{\rm
		The set of \textit{M\`ela numbers} is defined to be the set $\M =\{m_i: m_1 = 1, m_i = 2m_{i-1} + 1, i\in \N, i\ge 2\}$.}
\end{defn}

\ni Invoking the above definition, it can immediately be noted that $|V(G_{A^n})| = m_n$.

\vspace{0.2cm}

\ni Some open problems\footnote{The first author wishes to dedicate these open problems to Ms. M\`ela Odendaal, who is expected to grow up as a great mathematician.} we wish to mention in this context are the following.

\begin{prob}
	Show that $m_i + m_j \notin \M,$ $m_im_j \notin \M$ and if $m_i > m_j$ then $m_i - m_j \notin \M$. 
\end{prob} 

\begin{prob}
	Show that $m_{ki}$ is divisible by $m_i$ but, $\frac{m_{ki}}{m_i} \notin \M$. 
\end{prob}

Finding other number theoretical results for \textit{M\`ela numbers} are also challenging problems which seems to be promising. All these facts indicate that there is a wide scope for further research in this area.



\end{document}